\title{Higher dimensional worm domains}
\author{Simone Calamai}
\address{Dipartimento di Matematica e Informatica ``Ulisse Dini"\\
Università degli studi di Firenze\\
Viale Morgagni, 67/a - 50134 Firenze (Italy)}
\email{simone.calamai@unifi.it}
\author{Gian Maria Dall'Ara}
\address{Istituto Nazionale di Alta Matematica ``Francesco Severi"\\ Research Unit Scuola Normale Superiore\\
	Piazza dei Cavalieri, 7 - 56126, Pisa (Italy)}
\email{dallara@altamatematica.it}
\thanks{2020 Mathematics Subject Classification: 32T20 (primary), 32W05}
\date{\today}
\newcommand{\C}{\mathbb{C}}
\newcommand{\R}{\mathbb{R}}
\newtheorem{thm}{Theorem}
\newtheorem{lem}[thm]{Lemma}
\begin{document}

\maketitle

\begin{abstract}
We show how to construct a class of smooth bounded pseudoconvex domains whose boundary contains a given Stein manifold with strongly pseudoconvex boundary, having a prescribed codimension and D'Angelo class (a cohomological invariant measuring the "winding" of the boundary of the domain around the submanifold). Some open questions in the regularity theory of the $\overline\partial$-Neumann problem are discussed in the setting of these domains. 
	\end{abstract}

\section{Introduction}\label{sec:introduction}

A natural setting for those aspects of function theory having to do with boundary regularity (of holomorphic functions, biholomorphic or proper holomorphic mappings, solutions of the $\overline{\partial}$-Neumann problem, invariant metrics etc.) is a bounded pseudoconvex domain with smooth boundary, in $\C^n$ or in a more general Stein manifold. In this generality, the existence of a positive-dimensional complex submanifold sitting inside the boundary of the domain is conjectured, and in certain cases known, to be an obstruction to various favorable properties (two disparate examples, among others, are compactness of the $\overline\partial$-Neumann operator \cite{fu_straube, sahutoglu_straube, dallara_noncomp} and Gromov hyperbolicity of the Kobayashi distance \cite{gaussier_seshadri,zimmer,arosio_dallara_fiacchi}). 

Investigations of the most genuinely global aspects of function theory, like existence of Stein neighborhood bases \cite{diederich_fornaess_worm,bedford_fornaess} and regularity of the Bergman projection and the $\overline\partial$-Neumann operator \cite{barrett_worm, boas_straube, christ_worm}, indicate that in these matters the obstruction to the property of interest is not a complex submanifold in the boundary per se, but rather \emph{the way it sits inside the boundary}. To capture this subtler aspect, Bedford--Fornaess \cite{bedford_fornaess} and, in a more general setting, Boas--Straube \cite{boas_straube} defined a de Rham cohomology class measuring the "turning" or "winding" of the boundary of the domain around the submanifold in question. Let us recall the definition, referring to the review in \cite[Introduction and Section 4]{dallara_mongodi} for more details. \newline 

Let $\Omega\subseteq \C^n$ be a smooth pseudoconvex domain whose boundary $b\Omega$ contains a complex submanifold \[\iota: Y\hookrightarrow b\Omega\] (we will only be concerned with embedded submanifolds). A pseudo-Hermitian structure $\theta$ on $b\Omega$ is any real and nowhere vanishing one-form that annihilates the complex tangent distribution to $b\Omega$. We say that a real vector field $T$ on the boundary is $\theta$-normalized if $\theta(T)\equiv 1$. To each pair $(\theta, T)$ with $T$ $\theta$-normalized, we associate the \emph{D'Angelo form} \[
\alpha_{\theta, T} = -\mathcal{L}_T\theta, 
\]
where $\mathcal{L}_T$ is the Lie derivative in the direction of $T$. As in \cite{dallara_mongodi}, we refer to the set of all D'Angelo forms as the \emph{D'Angelo class} of $b\Omega$, or of the domain itself. While the D'Angelo class is not a \emph{bona fide} de Rham cohomology class, it defines an element of $H^1_{\mathrm{dR}}(Y,\R)$. Indeed, the restriction $\iota^*\alpha_{\theta, T}$ of any D'Angelo form to the complex submanifold $Y$ turns out to be closed and the cohomology class $[\iota^*\alpha_{\theta, T}]\in H^1_{\mathrm{dR}}(Y, \R)$ does not depend on the choice of $\theta$ and $T$. We will refer to this class as the \emph{restriction of the D'Angelo class of the domain to $Y$}. All of the above works identically if $\Omega$ is a domain with smooth Levi-pseudoconvex boundary in a complex manifold $X$.\newline 

As an example, consider the bounded domain $\Omega\subseteq \C_z^*\times \C_w$ defined by \[
r:=|w-e^{it \log|z|^2}|^2-1+\chi(\log|z|)<0, 
\]
where $t\in \R\setminus\{0\}$ and $\chi$ is a non-negative smooth function identically zero on a bounded interval $I\subseteq \R$ and $\geq 1$ outside a larger bounded interval. The boundary $b\Omega$ contains the annulus $Y=\{(z,0)\colon\, \log|z|\in I\}$. As shown in the celebrated paper \cite{diederich_fornaess_worm}, for appropriate choices of $\chi$ the domain is smooth and pseudoconvex. In this case, the domain is usually called a \emph{Diederich--Fornaess worm domain}. In order to compute the restriction of the D'Angelo class of $\Omega$ to $Y$, we use \cite[Proposition-Definition 4.1 (vi)]{dallara_mongodi}, which says that the domain in consideration has a D'Angelo form $\alpha$ such that  \[
\alpha(Z) = 2\partial \overline\partial r(Z,\overline{N})\qquad (Z\in T^{1,0}b\Omega), 
\] where $N$ is any smooth $(1,0)$-vector field defined in a neighborhood of $b\Omega$ such that $Nr\equiv 1$. Choosing $N=(\lvert\frac{\partial r}{\partial z}\rvert^2+\lvert\frac{\partial r}{\partial w}\rvert^2)^{-1}(\frac{\partial r}{\partial \overline{z}}\frac{\partial }{\partial z}+\frac{\partial r}{\partial \overline{w}}\frac{\partial }{\partial w})$, one easily sees that $N_{|Y}=-e^{it\log|z|^2}\frac{\partial}{\partial w}$. Since $\frac{\partial^2 r}{\partial z\partial \overline{w}}_{|Y}=-ite^{it\log|z|^2}\frac{1}{z}$, we get \[
\alpha\left(\frac{\partial}{\partial z}\right) = 2\frac{it}{z}\qquad 
\]
at points of $Y$. Since $\alpha$ is a real form, we conclude that \[
\iota^*\alpha = -4t\, \mathrm{Im}\frac{dz}{z}.
\]
Notice that $\mathrm{Im}\frac{dz}{z}=d\arg z$ is a generator of the first cohomology group of the annulus $Y$. Thus, the Diederich--Fornaess construction shows that \emph{every element of $H^1_{\mathrm{dR}}(Y, \R)$ arises as (the restriction of) a D'Angelo class of a smooth bounded pseudoconvex domain in $\C^2$ containing $Y$ in the boundary}. \newline 

The goal of this paper is to show that the above holds in much wider generality. We will provide a detailed construction of a smooth bounded domain satisfying the following properties: \begin{enumerate}
\item its boundary contains a given complex submanifold $Y$, 
\item every boundary point not lying on the closure of $Y$ is strongly pseudoconvex,
\item the restriction of its D'Angelo class to $Y$ is a prescribed element of $H^1_{\mathrm{dR}}(Y, \R)$.
	\end{enumerate} 
We make one crucial assumption, namely $Y$ is taken to have \emph{strongly pseudoconvex boundary}, while we have no restriction on the cohomology class. We may also prescribe the codimension of $Y$. More precisely, we have the following:

\begin{thm}\label{thm:main}
	Let $X$ be a Stein manifold.
	Let $Y \subset X$ be a precompact domain with strongly pseudoconvex boundary of class $C^k$, where $k\geq 2\dim_\C Y$ (possibly $\infty$). Let $\gamma\in H^1_{\mathrm{dR}}(Y,\R)$. Let $d\geq 1$ be an integer. Then there exists a domain $W$ in $X\times \C^d$ having the following properties: \begin{enumerate}
		\item $W$ is bounded and has $C^k$-smooth boundary, 
		\item the boundary $bW$ contains the complex submanifold with boundary $\overline{Y}\times \{0\}$, which has complex codimension $d$ in the ambient space and real codimension $2d-1$ in the boundary, 
		\item $bW$ is pseudoconvex, and strongly pseudoconvex outside $\overline{Y}\times \{0\}$, 
		\item the D'Angelo class of $W$, restricted to $Y\times \{0\}\equiv Y$, is $\gamma$.
	\end{enumerate}
\end{thm}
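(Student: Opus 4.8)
The plan is to realise $W$ as a ``twisted ball bundle'' over $\overline Y\times\{0\}$, in the spirit of the Diederich--Fornaess defining function $|w-g(z)|^2-1+\chi(\cdot)$ recalled above, but arranged so that the only weakly pseudoconvex boundary points are those of $\overline Y\times\{0\}$. The first step is to encode $\gamma$ in a single smooth ``winding function''. Since $\overline Y$ has strongly pseudoconvex $C^k$ boundary it is a Stein compact, hence has a Stein neighbourhood $V\Subset X$; fix one together with a $C^k$ strictly plurisubharmonic defining function $\rho$ of $Y$ on $V$, shrinking $V$ so that $\{\rho<c\}\Subset V$ for some $c>0$ (this is where the regularity hypothesis $k\ge 2\dim_{\C}Y$ enters). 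Because $V$ is Stein, the holomorphic de Rham theorem identifies $H^1(V;\C)$ with the closed holomorphic $1$-forms modulo $d\mathcal{O}(V)$, and $Y\hookrightarrow V$ is a homotopy equivalence; so I may pick a closed holomorphic $1$-form $\omega$ on $V$ whose class in $H^1(V;\C)\cong H^1_{\mathrm{dR}}(Y,\C)$ is $-\tfrac i4\gamma$. Since $\gamma$ is real, $[\operatorname{Re}\omega]=0$, so $\operatorname{Re}\omega=d\psi$ for a single-valued real pluriharmonic $\psi\in C^\infty(V)$ with $\partial\psi=\tfrac12\omega$; then $g:=e^{2i\psi}\colon V\to S^1$ is a well-defined smooth map with $Zg=i\,\omega(Z)\,g$ for $Z\in T^{1,0}$.

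Next, fixing a bounded smooth strictly plurisubharmonic $\tau$ on $V$ and a convex non-decreasing $C^k$ function $\chi\colon\R\to[0,\infty)$ with $\chi\equiv 0$ on $(-\infty,0]$ and $\chi$ large on $[c,\infty)$, I would set
\[
R(x,w)=e^{\tau(x)}\,|w|^2-2\operatorname{Re}\!\big(\overline{g(x)}\,w_1\big)+\chi(\rho(x)),\qquad W:=\{R<0\}\subset V\times\C^d .
\]
Rewriting $R=e^{\tau}\big(|w_1-e^{-\tau}g|^2+\sum_{j\ge 2}|w_j|^2\big)-e^{-\tau}+\chi(\rho)$ exhibits $W$ as a ball bundle over $\overline Y$ of centre $e^{-\tau}g\,e_1$ and squared radius $e^{-2\tau}$, ``wound'' by $g$. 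Then $R$ is $C^k$; $\{R<0\}$ is bounded (since $R<0$ forces $e^{\tau}|w|<2$ and hence $\chi(\rho)$ bounded); $dR\neq 0$ on $\{R=0\}$; and $R\equiv 0$, $\partial R=-\overline g\,dw_1$ along $\overline Y\times\{0\}$. This gives (1), (2), and the stated codimensions, $\overline Y\times\{0\}$ being a complex submanifold with boundary of complex codimension $d$, hence real codimension $2d-1$ in $bW$. For (4), take any $(1,0)$ field $N$ with $NR\equiv1$; along $\overline Y\times\{0\}$ the equation $\partial R(N)=1$ forces the $w_1$-component of $N$ to be $-g$, and since $R_{x_i\overline z_l}$ vanishes on $\overline Y\times\{0\}$ unless $z_l=w_1$, the formula $\partial R/\partial\overline w_1=e^{\tau}w_1-g$ gives, for $Z\in T^{1,0}Y$,
\[
\alpha(Z)=2\,\partial\overline\partial R(Z,\overline N)=\overline g\,Zg=i\,\omega(Z),
\]
where $\alpha$ is the D'Angelo form of \cite[Proposition--Definition 4.1(vi)]{dallara_mongodi}. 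As $\alpha$ is real this means $\iota^*\alpha=-4\operatorname{Im}\omega$, whose class is $-4\operatorname{Im}\!\big(-\tfrac i4\gamma\big)=\gamma$.

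The substance of the argument, and the step I expect to be the main obstacle, is (3): the Levi form of $bW$ must be positive semidefinite everywhere, strictly positive off $\overline Y\times\{0\}$, and with kernel exactly $T^{1,0}(\overline Y\times\{0\})$ there. I would split $i\partial\overline\partial R$ into the contributions of $e^{\tau}|w|^2$, of $-2\operatorname{Re}(\overline g w_1)$, and of $\chi(\rho)$. The first equals $e^{\tau}\sum_j|dw_j+w_j\partial\tau|^2+e^{\tau}|w|^2\,i\partial\overline\partial\tau$, hence is $\gtrsim|w|^2$ times a positive form in the base ($x$-)directions as soon as $w\neq 0$; the third equals $\chi'(\rho)\,i\partial\overline\partial\rho+\chi''(\rho)\,i\partial\rho\wedge\overline\partial\rho$, strictly positive in the base directions where $\rho>0$; the middle term equals $8\operatorname{Re}(\overline g w_1)\,i\partial\psi\wedge\overline\partial\psi$ plus cross terms pairing $dw_1$ with $\partial\psi$. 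The two key points are that on $bW$ one has $\operatorname{Re}(\overline g w_1)=\tfrac12\big(e^{\tau}|w|^2+\chi(\rho)\big)\ge 0$, so the middle term is itself nonnegative there, and that the tangency condition $\partial R(Z)=0$ forces the $w_1$-component of $Z$ to be $O(|w|)$, whence the cross terms cancel to leading order on the complex tangent space---exactly the mechanism by which the classical worm is pseudoconvex. Assembling the three pieces yields $\mathrm{Levi}\ge 0$ everywhere, with a nontrivial kernel only when $w=0$ and $\rho\le 0$, i.e. on $\overline Y\times\{0\}$, where it is precisely $T^{1,0}(\overline Y\times\{0\})$; choosing $\tau$ with sufficiently large Hessian and $\chi$ growing fast enough then dominates the remaining cross terms uniformly on $bW$ (including at boundary points where $|w|$ is of unit size, where the naive worm geometry would produce extra weakly pseudoconvex points) and upgrades semidefiniteness to strict positivity off $\overline Y\times\{0\}$, giving (3).

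The delicate point throughout is that the ``winding'' enters the Levi form with a negative sign, through $-2\operatorname{Re}(\overline g w_1)$ (equivalently, through the displacement of the ball centre off the zero section), and it is the combination of the exact cancellation of the cross terms on the complex tangent space with the strictly plurisubharmonic weight $e^{\tau}$ that keeps $bW$ pseudoconvex while still containing $\overline Y\times\{0\}$---and strongly pseudoconvex immediately off it---regardless of how large the prescribed class $\gamma$ is.
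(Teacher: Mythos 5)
Your domain coincides, up to notation, with the one the paper builds: your $e^{2i\psi}$ is the paper's $e^{iu}$ with $u=2\psi$ pluriharmonic representing $\gamma$ on a Stein neighbourhood of $\overline Y$, your $e^{\tau}$ plays the role of $\sigma+K$, your $\chi(\rho)$ plays the role of $\eta=\theta(d)$, and the D'Angelo class computation is the same. Where you diverge is the verification of (3): the paper multiplies the defining function on each chart by $e^{v_j}$, with $v_j$ a local pluriharmonic conjugate of $u$, so that the winding term becomes pluriharmonic and the local defining function is a sum of three plurisubharmonic pieces (Lemmas \ref{lem:spsh_times_square} and \ref{lem:logph_times_flat}); you estimate the Levi form of the global $R$ on the complex tangent space directly. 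That route can be made to work, but your two ``key points'' are not the operative mechanism. First, substituting the tangency relation $\partial R(v)=0$, i.e.\ $\bar g b_1=e^\tau\sum_j\bar w_jb_j+e^\tau|w|^2Z\tau+2iw_1\bar g Z\psi+\chi'(\rho)Z\rho$, into the cross term $-4\,\mathrm{Im}\bigl(\bar g b_1\overline{Z\psi}\bigr)$ produces exactly $-8\,\mathrm{Re}(\bar gw_1)|Z\psi|^2$ from the third summand, which cancels your ``nonnegative middle term'' identically on $bW$; no net positivity survives from the winding, and everything must be absorbed by $e^\tau L_\tau$ and by $\chi'L_\rho+\chi''|Z\rho|^2$. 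Second, the same relation shows $b_1$ is \emph{not} $O(|w|)$ where $\chi'(\rho)>0$; the residual cross term of size $\chi'(\rho)|Z\rho||Z\psi|$ must be dominated by $\chi'L_\rho+\chi''|Z\rho|^2$, which requires $\chi''/\chi'\geq C$ on the entire range of $\rho$ over $\overline W$. A generic convex $C^k$ cutoff does not satisfy this: one needs a flat choice such as $\chi(t)=e^{-1/t}$ \emph{and} one must shrink the ball radius (take $\tau$ large) so that $\overline W\subseteq\{\rho<\epsilon\}$ with $\epsilon$ small. This is precisely the ``capping off'' difficulty that the paper isolates in Lemma \ref{lem:logph_times_flat} and identifies as the missing detail in Bedford--Fornaess; your sketch reproduces that gap.

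The other genuine omission is smoothness of the boundary. You assert $dR\neq0$ on $\{R=0\}$, but $\partial R/\partial w_j=e^\tau\bar w_j-\bar g\,\delta_{1j}$ vanishes exactly at $w=(e^{-\tau}g,0')$, and such points lie on $\{R=0\}$ precisely where $\chi(\rho)=e^{-\tau}$; there one computes $dR=d\bigl(\chi(\rho)-e^{-\tau}\bigr)$, which has no reason to be nonzero. The paper removes these critical points by a Sard-type genericity argument (choosing the additive constant $K$ in $\sigma+K$ to be a regular value of $e^{1/d}-\sigma$), and this --- not the construction of the strictly plurisubharmonic defining function, as you suggest --- is where the hypothesis $k\geq2\dim_\C Y$ is actually used. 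Without some such argument your $W$ may fail to have $C^k$ boundary.
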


The proof of the theorem is given in Section \ref{sec:proof}, after a few preliminary facts are established in Sections \ref{sec:preliminary} and \ref{sec:two_lemmas}. \newline

Our proof is inspired by \cite{diederich_fornaess_worm} and \cite{bedford_fornaess}. In particular, in the latter paper a weaker version of the codimension one case of Theorem \ref{thm:main} is stated as Proposition 3.3. As the bare sketch of proof provided by the authors does not contain details about the "capping off" of the domain (the function $\eta$ in our notation below) nor seems to yield a strongly pseudoconvex boundary outside $\overline{Y}$, here we refine and generalize their construction and make it completely explicit. On top of that, we believe that the domains constructed here may provide a test-bed for some unsolved problems and conjectures in several complex variables. For a further discussion of this point, see Section \ref{sec:open}.\newline 

As discussed above, in the special case where $Y$ is an annulus in the plane (annuli are the "simplest" finite Riemann surfaces with non-trivial first cohomology group) and the codimension is one, Theorem \ref{thm:main} has been established in \cite{diederich_fornaess_worm}. The higher codimensional case appears in \cite{barrett_sahutoglu}. 

A class of smooth bounded pseudoconvex domains containing finite Riemann surfaces in their boundary has been considered in \cite{arosio_dallara_fiacchi}, but notice that the domains in that paper satisfy a weaker version of condition (3), because there are finitely many real curves consisting of weakly pseudoconvex (finite type) points in the complement of the Riemann surface.

A three-dimensional variant of the Diederich--Fornaess worm domain is the object of the recent preprint \cite{krantz_peloso_stoppato}. The boundary of the domain considered by these authors contains the product of two annuli and is weakly pseudoconvex of Levi rank one on an open subset (see the last paragraph of Section \ref{sec:preliminary} for a comment related to this point). Notice that the methods discussed below cannot be directly applied to construct a domain which is strongly pseudoconvex outside a bi-annulus, as this is not strongly pseudoconvex (and only Lipschitz regular). It would certainly be of interest to generalize Theorem \ref{thm:main} by relaxing the strong pseudoconvexity (and the $C^k$ regularity) condition. Notice that the hypothesis that $Y$ be weakly pseudoconvex cannot be dispensed with. See \cite[Remark 2.2]{bedford_fornaess}. 

\section{A class of domains}\label{sec:preliminary}

In this section, we make a few elementary remarks on a large class of domains among which the desired ones will be found. \newline 

Let $X$ be a complex manifold and let $d\geq 1$ be an integer. Let $k\in \{2,3,\ldots\}\cup\{\infty\}$. We denote by $w=(w_1,\ldots, w_d)$ the standard coordinates on $\C^d$ and we write $w'$ for $(w_2,\ldots, w_d)$. We will use the absolute value notation $|\cdot|$ for the Euclidean norm in complex spaces of any dimension.

Given three real-valued functions $u,R,\eta$ of class $C^k$ on $X$ such that $R>0$ and $\eta\geq0$, we consider the open set $W\subseteq X\times \C^d$ defined by \begin{equation}\label{eq:general_domain}
|w_1-Re^{iu}|^2+|w'|^2<R(R-\eta), 
\end{equation}
where $u, R, \eta$ are viewed as functions on $X\times \C^d$ independent of $w$. Notice that $W$, as a smooth manifold, is a ball bundle over the base $\{\eta<R\}\subset X$. The open ball over a point of the base never contains the origin in the fiber $\C^d$, and is tangent to the origin exactly when $\eta$ vanishes at that point. In particular, the boundary of $W$ contains the set \[
\{(z,0)\in X\times \C^d\colon\, \eta(z)=0\}. 
\]
Item (2) of Theorem \ref{thm:main} will be achieved by choosing $\eta$ so that \[\overline{Y}=\{\eta=0\}\subseteq X.\] 

A defining function for $W$ is \begin{eqnarray}
\notag r&=&R^{-1}|w_1-Re^{iu}|^2+R^{-1}|w'|^2+\eta-R\\
\label{def_function}&=&R^{-1}|w|^2-2\mathrm{Re}(w_1e^{-iu})+\eta. 
\end{eqnarray}
Since $\frac{\partial r}{\partial w_j}=R^{-1}\overline{w_j}-e^{-iu}\delta_{1j}$ (where $\delta_{jk}$ is the Kronecker delta), the boundary of $W$ is $C^k$-smooth, except possibly at points where $w=(Re^{iu}, 0')$. Here $0'$ is the origin of $\C^{d-1}$. Notice that such points are in the interior of the domain if $R>\eta$. Thus, to ensure smoothness of the whole boundary of $W$, we will assume that \[
dr_{|w=(Re^{iu},0'), \, \eta=R} = d(\eta-R)_{|\eta=R}\neq 0, 
\]
that is, $0$ is a regular value of the function $R-\eta$. It is also clear that $W$ is precompact in $X\times \C^d$ if $\{\eta<R\}$ is precompact in $X$. \newline 

The question of whether the domain $W$ is pseudoconvex is more delicate. An easy and well-known observation is that pseudoconvexity of $W$ forces pluriharmonicity of the function $u$ in the interior of the set $\{\eta=0\}$. To see this, one may compute the Levi form of the defining function $r$ at a boundary point $(z,0)$ with $\eta(z)=0$ or, what is essentially the same, apply the one-parameter group of scalings $(z,w)\mapsto (z,\epsilon w)$ to the domain and let $\epsilon$ tend to zero: the limiting domain is one of Barrett's sectorial domains \cite{barrett_sectorial} times $\C^{d-1}$. We omit the easy details (cf.~\cite[Proposition 3.1]{bedford_fornaess}). \newline 

Taking into account all the observations made above, we will look for the desired domain among those associated to a triple of functions $u,R,\eta$ satisfying the following properties: \begin{enumerate}
	\item[(a)] $u$ is pluriharmonic, 
	\item[(b)] $\overline{Y}=\{\eta=0\}$, 
\item[(c)] $\{\eta<R\}$ is precompact, 
\item[(d)] $0$ is a regular value of $R-\eta$. 	
	\end{enumerate}

As will be seen below, choosing a nonconstant $R$ function is what allows one to achieve strong pseudoconvexity outside the closure of the complex submanifold if $\dim_\C Y\geq2$. This is not necessary when $Y$ is one-dimensional (as in \cite{diederich_fornaess_worm}), but if $n=\dim_\C Y\geq 2$ and $R$ is constant, then any domain of the form \eqref{eq:general_domain} satisfying (a) has the property that the set \[\{|w_1-Re^{iu}|^2+|w'|^2=R^2,\,  w\neq 0\}\cap (Y\times \C^d)\subseteq bW\] is foliated by $(n-1)$-dimensional complex manifolds. Thus, condition (3) of Theorem \ref{thm:main} cannot be satisfied. This is an immediate consequence of the fact that the level sets of $u_{|Y}$ are Levi flat and hence foliated by complex hypersurfaces (in $Y$). 

\section{Two lemmas}\label{sec:two_lemmas}

We now present two lemmas that will guide our choice of the functions $R$ and $\eta$. In this section, if $g$ is a function, we let $g_j=\frac{\partial g}{\partial z_j}$, $g_{\overline{k}}=\frac{\partial g}{\partial \overline{z_k}}$ and $g_{j\overline{k}}=\frac{\partial^2 g}{\partial z_j\partial \overline{z_k}}$. 

\begin{lem}\label{lem:spsh_times_square}
	Let $D\subseteq \C^n$ be open and let $\sigma$ be a strictly plurisubharmonic function of class $C^2$ on $D$. For any compact set $L\subset D$, there exists a positive constant $K_L$ such that the following holds: if $G$ is a nowhere vanishing holomorphic function defined on a neighborhood of $L$ and $K\geq K_L$, then the function $(\sigma+K) |G|^2|w|^2$ is strictly plurisubharmonic at every point of $L\times(\C^d\setminus \{0\})$. 
\end{lem}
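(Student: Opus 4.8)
The plan is to compute the complex Hessian of $\Phi := (\sigma + K)|G|^2|w|^2$ at a point $(z_0, w_0)$ with $z_0 \in L$ and $w_0 \neq 0$, and to show that for $K$ large (depending only on $L$, not on $G$ or on the point) the Hessian is positive definite. Write $|w|^2$ as a function on the fiber and $h := |G|^2$, which is plurisubharmonic (indeed its complex Hessian is $G_j \overline{G_k}$, a rank-one positive semidefinite matrix). The key structural point is that $\Phi$ factors through a product of two functions of disjoint variable groups only up to the $(\sigma+K)$ twist, so I would split a test vector $(\zeta, \xi) \in \C^n \times \C^d$ accordingly and expand $\sum \Phi_{a\overline b} v^a \overline{v^b}$ by the Leibniz rule, grouping terms as: the pure-$w$ part, the pure-$z$ part, and the mixed part.

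First I would record the three contributions. The pure-$w$ terms give $(\sigma+K)h |\xi|^2$, which is the good term making the fiber directions strictly plurisubharmonic. The pure-$z$ terms give $|w|^2$ times the complex Hessian of $(\sigma+K)h = (\sigma+K)|G|^2$, which expands as $|w|^2\left[ (\sigma+K)|\sum G_j \zeta_j|^2 + 2\operatorname{Re}\big((\sum \sigma_j \zeta_j)\overline{(\sum (\overline{G}\,?)}\,\big) \text{-type cross terms} + h \sum \sigma_{j\overline k}\zeta_j\overline{\zeta_k}\right]$; here the decisive fact is that $\sigma$ is \emph{strictly} plurisubharmonic, so $\sum\sigma_{j\overline k}\zeta_j\overline{\zeta_k} \geq c_L |\zeta|^2$ on the compact set $L$ with $c_L > 0$. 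The mixed terms are of the form $2\operatorname{Re}\big( \langle \text{fiber radial vector}, \xi\rangle \cdot \overline{(\text{something linear in }\zeta \text{ with coefficients built from } \sigma, G, \overline{G})} \big)$, and are controlled by Cauchy--Schwarz.

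Then I would close the estimate by an absorption argument. The cross terms involving $\sigma$-derivatives and $G$-derivatives are bounded, on $L$, by $C(|G|^2 + \text{derivative bounds of } G)|w|^2|\zeta|^2$ and by $C|G|^2 |w| |\zeta| |\xi|$; the first type is absorbed into $h\sum\sigma_{j\overline k}\zeta_j\overline\zeta_k \geq c_L h |\zeta|^2 |w|^2$ as soon as $K$ is large enough that $(\sigma + K) \geq$ (appropriate multiple) — wait, more carefully: the troublesome cross terms come with the bounded factor coming from $\sigma$ and $G$, \emph{not} amplified by $K$, so after choosing $K$ so that $\sigma + K \geq 1$ on $L$ and using $h > 0$, the genuinely dangerous terms are the $z$--$w$ cross terms, which are $\lesssim h|w||\zeta||\xi|$ and are absorbed by the geometric-mean of the good term $(\sigma+K)h|\xi|^2$ (size $\gtrsim K h |\xi|^2$) and the good term $c_L h |w|^2|\zeta|^2$, giving slack $\gtrsim \sqrt{K c_L}\, h |w| |\zeta||\xi|$, which beats the cross term once $K \gtrsim c_L^{-1}(\text{bounds of }\sigma, G \text{ on } L)^2$. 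The one subtlety I want to flag as the main obstacle: all the "bounds of $G$" appearing here must be turned into something depending only on $L$. This is exactly where one invokes that $G$ is \emph{holomorphic and nowhere vanishing on a neighborhood of $L$} — but a priori different such $G$'s have wildly different derivative bounds, so the honest way is to divide through by $|G|^2$ from the start: since $|G|^2 > 0$, after this normalization the only derivatives of $G$ that survive in the quadratic form are ratios like $G_j/G$, which are the components of $\partial \log|G|^2 = \partial\log G$; and these too are unbounded over all admissible $G$. The correct resolution, which I would carry out, is that after dividing by $h=|G|^2$ the Hessian of $\log h$ in the $z$-variables \emph{vanishes identically} (as $\log|G|^2$ is pluriharmonic), so $G$ disappears entirely from the quadratic form except through the harmless positive factor it contributes — leaving an estimate in terms of $\sigma$, its first and second derivatives on $L$, and the dimension, from which $K_L$ is read off, independent of $G$.
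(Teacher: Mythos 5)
Your overall strategy (expand the Levi form, use strict plurisubharmonicity of $\sigma$ on $L$ in the base directions and the fiber Laplacian of $|w|^2$ in the fiber directions, absorb cross terms by taking $K$ large) matches the paper's, and you correctly identify that the entire difficulty is uniformity of $K_L$ in $G$. However, the mechanism you propose for that uniformity does not work as stated, and this is a genuine gap. Writing $\Phi=e^{\psi}\Psi$ with $\psi=\log|G|^2$ pluriharmonic and $\Psi=(\sigma+K)|w|^2$, and letting $Z=\sum_j a_j\partial_{z_j}$, one gets
\[
e^{-\psi}\,\mathcal{L}\Phi(v,v)=|\partial\psi(v)|^2\,\Psi+2\,\mathrm{Re}\bigl(\partial\psi(v)\,\overline{\partial\Psi(v)}\bigr)+\mathcal{L}\Psi(v,v),
\]
so pluriharmonicity of $\log|G|^2$ kills only the term $\mathcal{L}\psi(v,v)\,\Psi$; the first-order quantity $\partial\psi(v)=ZG/G$ survives in the quadratic form and is \emph{not} bounded uniformly over admissible $G$. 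In particular the cross term $2(\sigma+K)\,\mathrm{Re}\bigl((ZG/G)\sum_k w_k\overline{b_k}\bigr)$ has size $K\,|ZG/G|\,|w|\,|b|$ --- amplified both by $K$ and by the uncontrolled factor $|ZG/G|$ --- so it is not "$\lesssim h|w||\zeta||\xi|$" and cannot be beaten by choosing $K$ large, contrary to what your absorption sketch assumes. So $G$ does not "disappear from the quadratic form"; your argument as written does not close.

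The missing step is to complete the square in the fiber variable: set $\lambda_k=ZG\,w_k+G\,b_k$ (equivalently, apply the fiberwise biholomorphism $(z,w)\mapsto(z,G(z)w)$, which reduces the lemma to the case $G\equiv1$). All occurrences of $ZG$ then combine with $(\sigma+K)|b|^2$ and $(\sigma+K)|ZG/G|^2|w|^2$ into the single nonnegative block $(\sigma+K)|\lambda|^2$, and the Levi form becomes
\[
(\sigma+K)|\lambda|^2+2\,\mathrm{Re}\Bigl(Z\sigma\, G\sum_k w_k\overline{\lambda_k}\Bigr)+|w|^2|G|^2\sum_{j,k}\sigma_{j\overline k}a_j\overline{a_k},
\]
a quadratic form in $(|\lambda|,|G||w||a|)$ whose coefficients depend only on $\sigma$ and $L$; a discriminant (or Cauchy--Schwarz) argument then yields strict positivity on $L\times(\C^d\setminus\{0\})$ for $K\geq C+C^2/c+1$, with $c,C$ the lower Levi bound and the $C^1$-bound of $\sigma$ on $L$. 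This substitution is exactly what the paper does; with it in place, the rest of your absorption argument goes through.
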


\begin{proof}
Let $v=(a_1,\ldots, a_n, b_1,\ldots, b_d)\in \C^{n+d}$ and $Z=\sum_{j=1}^na_j\frac{\partial}{\partial z_j}$. Evaluating the Levi form of $f=(\sigma+K)|G|^2|w|^2$ on $v$, we get \begin{eqnarray*}
&&|w|^2\sum_{j,k=1}^n\{(\sigma+K)|G|^2\}_{j\overline{k}} a_j\overline{a_k}+2\mathrm{Re}\left(Z((\sigma+K)|G|^2)\sum_{k=1}^dw_k\overline{b_k}\right)\\
&&+(\sigma+K)|G|^2|b|^2\\
&=&|w|^2|G|^2\sum_{j,k=1}^n\sigma_{j\overline{k}}a_j\overline{a_k}+2|w|^2\mathrm{Re}(Z\sigma G \overline{ZG})+|w|^2(\sigma+K)|ZG|^2\\
&&+2\mathrm{Re}\left((Z\sigma |G|^2+(\sigma+K)\overline{G} ZG)\sum_{k=1}^dw_k\overline{b_k}\right)+(\sigma+K)|G|^2|b|^2.
	\end{eqnarray*}
Letting $\lambda_k=ZGw_k+Gb_k$ ($k=1,\ldots, d$), the above expression may be rewritten as \begin{eqnarray*}
&&|w|^2|G|^2\sum_{j,k=1}^n\sigma_{j\overline{k}}a_j\overline{a_k}+(\sigma+K)|\lambda|^2+2\mathrm{Re}(Z\sigma G \sum_{k=1}^dw_k\overline{\lambda_k})\\
&\geq & (\sigma+K)|\lambda|^2-2|Z\sigma||G||w||\lambda|+|w|^2|G|^2\sum_{j,k=1}^n\sigma_{j\overline{k}}a_j\overline{a_k}. 
	\end{eqnarray*}
There are constants $c, C>0$, depending on $L$, such that \[
\sum_{j,k}\sigma_{j\overline{k}}a_j\overline{a_k}\geq c|a|^2,\qquad |Z\sigma|\leq C|a|, \qquad \sigma\geq -C 
\]
at every point of $L$. Thus, at points of $L\times \C^d$, the Levi form of $f$ evaluated on $v$ may be estimated from below by \[
(K-C)|\lambda|^2-2C|G||w||a||\lambda|+c|w|^2|G|^2|a|^2.
\]
The right hand side is a quadratic polynomial in $|\lambda|$ of discriminant\[
4|G|^2|w|^2(C^2-c(K-C))|a|^2. 
\]
Choosing $K$ sufficiently large, depending on $c$ and $C$, we may ensure that the discriminant is a negative multiple of $|w|^2|a|^2$ and that the leading coefficient is positive. From this, plus the obvious fact that $f$ is strictly plurisubharmonic in the $w$-direction for $K>C$, the thesis follows immediately. 

\end{proof}

Let $\theta\in C^\infty(\R)$ be the classical "flat" function \[
\theta(x):=\begin{cases}
	0\qquad & x\leq 0\\
	e^{-\frac{1}{x}}\qquad & x>0
	\end{cases}
\]

\begin{lem}\label{lem:logph_times_flat} Let $D\subseteq \C^n$ be open. Let $v$ be a pluriharmonic function on $D$, and let $d$ be a strictly plurisubharmonic function of class $C^2$ on $D$. Then, given a precompact open subset $E\subseteq D$, there exists $\epsilon_0>0$ such that the function $e^v\theta(d)$ is plurisubharmonic on $E\cap \{d< \epsilon_0\}$. 
	\end{lem}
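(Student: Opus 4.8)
The plan is to compute the complex Hessian of $f = e^v \theta(d)$ directly and show it is positive semidefinite once $d$ is small enough on the precompact set $E$. First I would record the derivatives of the flat function: writing $\theta' = \theta'(d)$, $\theta'' = \theta''(d)$, one has $\theta'(x) = x^{-2}e^{-1/x}$ and $\theta''(x) = (x^{-4} - 2x^{-3})e^{-1/x}$ for $x > 0$, so $\theta' = x^{-2}\theta$ and $\theta'' = (x^{-4} - 2x^{-3})\theta$; in particular $\theta, \theta', \theta'' \geq 0$ for $0 < x < \tfrac12$, and $\theta'' / \theta = x^{-4} - 2x^{-3} = x^{-4}(1 - 2x)$, which blows up like $x^{-4}$ as $x \to 0^+$, while $(\theta')^2/\theta = x^{-4}\theta$, which is comparable to $\theta' \cdot x^{-2}$. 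The key elementary fact is that $\theta'' \theta - (\theta')^2 = (x^{-4} - 2x^{-3} - x^{-4})\theta^2 = -2x^{-3}\theta^2 < 0$, but more importantly $\theta'' / \theta' = x^{-2} - 2x^{-1} = x^{-2}(1 - 2x) \to +\infty$, i.e. the ratio $\theta''/\theta'$ can be made arbitrarily large by taking $x$ small.

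Next I would expand the Levi form. Using $v$ pluriharmonic ($v_{j\bar k} = 0$) and $f = e^v\theta(d)$, a direct computation gives, for $Z = \sum_j a_j \partial_{z_j}$,
\[
\sum_{j,k} f_{j\bar k} a_j \bar a_k = e^v\Bigl( \theta'' |Zd|^2 + \theta' \sum_{j,k} d_{j\bar k} a_j \bar a_k + 2\theta' \,\mathrm{Re}(Zv \,\overline{Zd}\,) + \theta\, |Zv|^2 \Bigr),
\]
where I used $Zd = \sum_j d_j a_j$, $Zv = \sum_j v_j a_j$. (The term $\theta |Zv|^2$ comes from $(e^v)_{j\bar k} = e^v v_j \bar v_k$ since $v_{j\bar k}=0$.) Now on $E$, which is precompact in $D$, I have uniform bounds: $\sum_{j,k} d_{j\bar k} a_j\bar a_k \geq c|a|^2$ for some $c > 0$ by strict plurisubharmonicity, and $|Zv| \leq C|a|$, $|Zd| \leq C|a|$ for some $C$. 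Discarding the nonnegative term $\theta |Zv|^2$ and estimating the cross term by $2\theta' |Zv||Zd| \leq 2\theta' C^2 |a|^2 \leq \theta'' |Zd|^2 \cdot \tfrac{C^2|a|^2}{|Zd|^2}\cdot\bigl(\tfrac{\theta'}{\theta''}\bigr)\cdot 2$ — more cleanly, bound it by $\tfrac{\theta''}{2}|Zd|^2 + \tfrac{2(\theta')^2}{\theta''}|Zv|^2$ via the arithmetic–geometric inequality — the Levi form is at least
\[
e^v\Bigl( \tfrac{\theta''}{2}|Zd|^2 + \theta' c|a|^2 - \tfrac{2(\theta')^2}{\theta''}C^2|a|^2 \Bigr) \geq e^v\theta' |a|^2\Bigl( c - \tfrac{2C^2\theta'}{\theta''}\Bigr).
\]
Since $\theta'/\theta'' = x^2/(1-2x) \to 0$ as $x \to 0^+$, there is $\epsilon_0 \in (0,\tfrac12)$ such that $2C^2 \theta'(d)/\theta''(d) \leq c$ whenever $0 < d < \epsilon_0$; on the set $\{d \leq 0\} \cap E$ the function $f$ vanishes identically together with all its derivatives, so it is trivially plurisubharmonic there, and by continuity on all of $E \cap \{d < \epsilon_0\}$. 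This gives the claim.

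The only delicate point — and the one I'd flag as the main obstacle — is making sure the cross term $2\theta'\,\mathrm{Re}(Zv\,\overline{Zd})$ is genuinely dominated: one must use that $\theta''$ beats $\theta'$ near $x = 0$ (the ratio $\theta'/\theta''$ vanishes to second order), and that the "good" term $\theta' c|a|^2$ coming from strict plurisubharmonicity of $d$ is of the same order $\theta'$ as the part of the cross term not absorbed by $\theta''|Zd|^2$. Everything else — the uniform constants $c, C$ on the precompact set $E$, the vanishing of $f$ to infinite order on $\{d \leq 0\}$, the reduction from strict to plain plurisubharmonicity — is routine. Note that, unlike Lemma \ref{lem:spsh_times_square}, no largeness of a constant is needed here; instead one exploits the flatness of $\theta$, i.e. that $\theta''$ dominates $\theta'$ near $0$.
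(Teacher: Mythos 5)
Your proof is correct and takes essentially the same route as the paper's: both compute the Levi form of $e^v\theta(d)$ directly via the chain rule and absorb the cross term $2\theta'\,\mathrm{Re}(Zv\,\overline{Zd})$ by Cauchy--Schwarz, exploiting that $\theta''\sim d^{-4}\theta$ dominates while the strict plurisubharmonicity of $d$ supplies a positive term of order $\theta'\sim d^{-2}\theta$. The only differences are cosmetic (the paper folds $|Zv|^2$ into the lower bound for the Levi form of $d$ and uses fixed weights $\tfrac12,2$ in the absorption, whereas you use $|Zv|\le C|a|$ and weights tied to $\theta''/\theta'$).
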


\begin{proof}
At points of $E$ we have the inequality  \begin{equation}\label{logph_times_flat_1}
\sum_{j,k}a_j\overline{a_k}d_{j\overline{k}}\geq c|a|^2+c|\sum_ja_jv_j|^2 \qquad \forall a\in \C^n, 
\end{equation}
for some $c>0$. Since $e^v\theta(d)$ is $C^2$ and vanishes where $d\leq 0$, it is enough to compute its Levi form where $d>0$. A straightforward computation gives \[
(e^{v-\frac{1}{d}})_{j\overline{k}}=e^{v-\frac{1}{d}}\left\{v_jv_{\overline{k}}+\frac{2}{d^2}\mathrm{Re}(d_jv_{\overline{k}})+\left(\frac{1}{d^4}-\frac{2}{d^3}\right)d_jd_{\overline{k}}+\frac{1}{d^2}d_{j\overline{k}}\right\}. 
\]
 If $a\in \C^n$ and $Z=\sum_ja_j\frac{\partial}{\partial z_j}$, then at points of $E\cap \{d>0\}$ we have \begin{eqnarray*}
&&\sum_{j,k}a_j\overline{a_k}(e^v\theta(d))_{j\overline{k}}\\
&=&e^{v-\frac{1}{d}}\left\{|Zv|^2+2\mathrm{Re}\left(\frac{Zd}{d^2}\cdot \overline{Zv}\right)+(1-2d)\left|\frac{Zd}{d^2}\right|^2+\frac{1}{d^2}\sum_{j,k}a_j\overline{a_k}d_{j\overline{k}}\right\}\\
&\geq & e^{v-\frac{1}{d}}\left\{\left(\frac{c}{d^2}-1\right)|Zv|^2+\left(\frac{1}{2}-2d\right)\left|\frac{Zd}{d^2}\right|^2+\frac{c}{d^2}|a|^2\right\}, 
\end{eqnarray*}
where we used the elementary inequality $2\mathrm{Re}(a\overline{b})\geq -\frac{1}{2}|a|^2-2|b|^2$ and \eqref{logph_times_flat_1}. It follows that $e^v\theta(d)$ is strictly plurisubharmonic at every point of the set $E\cap \{0<d\leq\epsilon_0\}$ with $\epsilon_0=\min\{\frac{1}{4}, \sqrt{c}\}$. \end{proof}

\section{Proof of Theorem \ref{thm:main}}\label{sec:proof}

Let $d:X\rightarrow \R$ be a defining function of class $C^k$ for $Y$, which we assume, as we can, to be proper. Since $Y$ is strongly pseudoconvex, we may also assume that $d$ is strictly plurisubharmonic on a neighborhood of the boundary of $Y$ (see, e.g., \cite[Theorem 3.4.4]{chen_shaw} for this standard fact). We define the $C^k$ function\begin{equation}\label{eta}
\eta:=\theta(d)=\begin{cases}
	0\qquad & \text{on }\overline{Y}\\
	e^{-\frac{1}{d}}\qquad & \text{on }X\setminus \overline{Y}
	\end{cases}
\end{equation}

In proving Theorem \ref{thm:main}, we are allowed to replace $X$ with its open submanifold $\{d<\epsilon_0\}$ for any small $\epsilon_0>0$. In this way, we may put ourselves in the situation where $Y$ is a retract of $X$. Notice that the new $X$ is still Stein. At this point, the map induced in cohomology by the inclusion $\iota: Y\rightarrow X$, i.e., the restriction $\iota^*:H^1_{\mathrm{dR}}(X,\R)\rightarrow H^1_{\mathrm{dR}}(Y,\R)$, is surjective. In particular, $\gamma=\iota^*\beta$ for some $\beta\in H^1_{\mathrm{dR}}(X,\R)$. Since $X$ is Stein, the class $\beta$ is represented by a pluriharmonic function, i.e., $\beta=[2d^cu]$, where $d^c=i(\partial-\overline\partial)$ and \begin{equation}\label{u}
	u:X\rightarrow \R
\end{equation} is pluriharmonic (details of this classical fact can be found in \cite[p. 1077]{dallara_mongodi}). Notice in particular that $\gamma=0$ if and only if $u$ is globally the real part of a holomorphic function.  

Again because $X$ is Stein, it admits a global smooth, positive, and strictly plurisubharmonic function $\sigma$. Using such a function, we define \begin{equation}\label{R}R:=(\sigma+K)^{-1}, \end{equation}
where $K$ is a large positive constant to be determined. 

Let $W$ be the domain associated to the functions \eqref{eta}, \eqref{u}, \eqref{R} and the codimension $d\geq 1$ as in Section \ref{sec:preliminary}, that is, 
\[
W:=\left\{ (z,w) \in X\times \mathbb{C}^d \; : \, 
(\sigma +K)\left\{ \left| w - \frac{1}{\sigma +K} e^{iu} \right|^2+|w'|^2\right\}
-\frac{1}{\sigma +K} + \eta <0\right\}. 
\]
 Conditions (a) and (b) of Section \ref{sec:preliminary} are clearly satisfied. We claim that \emph{there are arbitrarily large values of $K$ for which (c) and (d) are also satisfied}. 

First of all, an easy computation shows that $\{\eta<R\}\setminus \overline{Y}=\{d<(\log(\sigma+K))^{-1}\}$. Thus, if $K>e^{\epsilon_0^{-1}}$, then $\{\eta<R\}$ is precompact. In fact, by taking $K$ sufficiently large, we may obtain the inclusion $\{\eta<R\}\subseteq \{d<\epsilon\}$, for any prescribed $\epsilon>0$. 

Next we observe that, since $R$ is positive, the value $0$ is achieved by $R-\eta$ precisely at those points of $X\setminus \overline{Y}$ where \[
e^{-\frac{1}{d}}-(\sigma+K)^{-1} = e^{-\frac{1}{d}}(\sigma+K)^{-1}\left(\sigma+K-e^{\frac{1}{d}}\right)=0.
\]
It is also clear that $0$ is a regular value of $R-\eta$ if and only if $K$ is a regular value of $e^{\frac{1}{d}}-\sigma$, viewed as a $C^k$ function on $X\setminus \overline{Y}$. By Sard Theorem, which is applicable because $k$ is at least the real dimension of $X$, the set of regular values is dense, and the claim above follows. Thus, items (1) and (2) of Theorem \ref{thm:main} hold, for arbitrarily large choices of $K$. We now look at item (3). \newline 

Because $\{\eta<R\}$ is precompact in $X$, it is possible to cover its closure with finitely many precompact coordinate patches $D_j$ such that the closure of each of them is contained in the domain of a pluriharmonic conjugate $v_j$ of $u$, that is, $F_j=u+iv_j$ is holomorphic on $D_j$. 

On the open set $D_j\times \C^d$, a local defining function for $W$ is provided by \[
r_j=e^{v_j}r, 
\]
where $r$ is the global defining function \eqref{def_function}. Thus, \[
r_j=(\sigma+K)|e^{-i\frac{F_j}{2}}w|^2-2\mathrm{Re}(w_1e^{-iF_j}) + e^{v_j}\eta. 
\]
By Lemma \ref{lem:spsh_times_square} applied to $L=\overline{D_j}$ and $G=e^{-i\frac{F_j}{2}}$, if $K$ is sufficiently large, then the first term in $r_j$ is strictly plurisubharmonic on $D_j\times (\C^d\setminus \{0\})$, and then necessarily plurisubharmonic on $D_j\times \C^d$. The second term is pluriharmonic, being the real part of a holomorphic function. Finally, by Lemma \ref{lem:logph_times_flat}, there exists $\epsilon_j>0$ with the property that the third term is plurisubharmonic on $D_j\cap \{d<\epsilon_j\}$, and hence plurisubharmonic when viewed as a function on $\left(D_j\cap \{d<\epsilon_j\}\right)\times \C^d$. As remarked above, by choosing $K$ sufficiently large, we may assume that $\overline{W}\subseteq \{d<\epsilon_j\}\times \C^d$. Thus, the third term is plurisubharmonic on a neighborhood of $bW \cap (D_j\times \C^d)$. 

Since there are finitely many open sets $D_j$, it is clear that $K$ may be taken so large that each of the local defining functions $r_j$ is the sum, near the boundary of $W$, of three plurisubharmonic functions, the first of which is actually strictly plurisubharmonic where $w\neq 0$. Since the only boundary points where $w$ vanishes are on the complex submanifold with boundary $\overline{Y}$, item (3) of the theorem holds for large values of $K$. 

We are left with the proof of item (4). As we did in the Introduction for the Diederich--Fornaess worm, we employ \cite[Proposition-Definition 4.1 (vi)]{dallara_mongodi}, which gives a formula for a D'Angelo form of the boundary in terms of a global defining function and an auxiliary vector field $N$. Using \eqref{def_function} as defining function and with the same choice of $N$ as in Section \ref{sec:introduction}, we get $N_{|Y\times \{0\}}=-e^{iu}\frac{\partial}{\partial w_1}$. In local holomorphic coordinates $(z_1,\ldots, z_n)$ near a point of $Y$, we have $\frac{\partial^2 r}{\partial z_j\partial \overline{w}_1}_{|Y\times \{0\}}=-ie^{iu}\frac{\partial u}{\partial z_j}$. Thus, there is a D'Angelo form $\alpha$ such that  \[
\alpha\left(\frac{\partial}{\partial z_j}\right)_{|Y\times \{0\}} = 2i\frac{\partial u}{\partial z_j}.
\]
Since the D'Angelo form is real, we must have $\iota^*\alpha = 2d^cu$ (where $\iota:Y\times \{0\}\hookrightarrow b\Omega$ is the embedding), that is, the D'Angelo class restricted to $Y\times \{0\}$ is exactly the given cohomology class $\gamma$. The proof is complete. 

\section{Further properties and some open problems}\label{sec:open}

As mentioned in the introduction, the higher dimensional worms just constructed may provide an ideal testing ground for various questions which  pertain to the regularity theory of the $\overline\partial$-Neumann problem and fall beyond the scope of current methods. This section elaborates on this point. See \cite[Chapter 2]{straube_book} for background on the $\overline\partial$-Neumann problem. \newline 

Let $W$ be one of the domains constructed in Section \ref{sec:proof}. Assume that $W$ is $C^\infty$-smooth. Denoting by $T^{1,0}bW$ the bundle of vectors of type $(1,0)$ tangent to the boundary, we may consider the \emph{Levi null distribution} \[
\mathcal{N}=\{(p,Z)\in T^{1,0}bW\colon\, \lambda_p(Z,Z)=0\}, 
\]
where $\lambda$ is any Levi form of $bW$. Item (3) of Theorem \ref{thm:main} states that the fiber $\mathcal{N}_p$ of the Levi null distribution is trivial at any point $p$ not in the closure of $Y$. The explicit construction of Section \ref{sec:proof} actually shows that $\mathcal{N}_p= T^{1,0}_p\overline{Y}$, that is, the Levi form has exactly $\dim_\C Y$ zero eigenvalues at those points. Indeed, the defining function $r$ is strictly plurisubharmonic in the $w$-direction. 

By \cite[Theorem 4.21]{straube_book}, it follows that \emph{the $\overline\partial$-Neumann operator $N_q$ on $W$ is not compact when $1\leq q\leq \dim_\C Y$}. We recall that the $\overline\partial$-Neumann operator $N_q$ is the inverse of the complex Laplacian associated to the $\overline\partial$-complex acting on $(0,q)$-forms. It is worth mentioning that it is conjectured that $q$-dimensional complex manifolds sitting in the boundary of a smooth bounded pseudoconvex domain are obstructions to compactness of $N_q$ (see \cite[Section 4]{fu_straube}). The additional strong pseudoconvexity assumption in the transversal directions appearing in \cite[Theorem 4.21]{straube_book} seems to be an artifact of the proof. Indeed, in the three-dimensional case this assumption has been relaxed to a transversal finite-type condition in \cite{dallara_noncomp}. 

On the other hand, \emph{the $\overline\partial$-Neumann operator $N_q$ on $W$ is compact for every $q>\dim_\C Y$}. This follows from \cite[Corollary 4.16]{straube_book}, because the set of infinite type points of $bW$, that is, $\overline{Y}$, satisfies the so-called property $(P_q)$ when $q$ is larger than the number of zero eigenvalues of the Levi form on $\overline{Y}$. While the techniques behind this argument go back to work of Catlin \cite{catlin_global} and Sibony \cite{sibony}, a novel way of viewing it is provided by the recently introduced notion of Levi core \cite{dallara_mongodi}. As shown by \cite{treuer} at the level of $(0,1)$-forms, whether property $(P_1)$ is satisfied by the boundary of the domain depends only on the support of its Levi core (see also \cite{gupta_straube_treuer} for a refinement of this result), which for higher dimensional worms happens to be equal to the set of infinite type points, but is usually smaller. A forthcoming paper \cite{dallara_mongodi_treuer} by the second-named author, S. Mongodi and J. Treuer extends \cite{treuer} to the higher degree case. \newline

We have seen that compactness in the $\overline\partial$-Neumann problem is completely settled for the domains constructed in Section \ref{sec:proof}. The situation is very different for global regularity. Recall that one says that this property holds at the level of $(0,q)$-forms if $N_q$ maps forms smooth up to the boundary to forms smooth up to the boundary. By \cite[Main Theorem of Section 3]{boas_straube}, we know that \emph{if $W$ is any domain enjoying the properties in Theorem \ref{thm:main} and the D'Angelo class of $W$ restricted to $Y$ is trivial ($\gamma=0$), then $N_q$ preserves smoothness up to the boundary}. Actually, $N_q$ preserves $L^2$ Sobolev spaces of all orders, a property called exact regularity. On the other hand, Christ \cite{christ_worm} showed that global regularity does not hold on the "original" Diederich--Fornaess worm domain, whose D'Angelo class is non-zero when restricted to an element of the first cohomology group of the annulus in the boundary. At this point it is tempting, albeit certainly imprudent, to conjecture that non-triviality of the D'Angelo class is an obstruction to global regularity. To decide whether this is true seems to be non-trivial, as Christ's remarkable proof combines quite flexible techniques with ad hoc methods that exploit the special features of the Diederich--Fornaess domain. 

A related observation (in view of \cite{liu_straube}) is that the Diederich--Fornaess index, introduced in \cite{diederich_fornaess_index}, of a higher dimensional worm is strictly less than one (hence "non-trivial") if and only if the D'Angelo class is non-trivial. See remark (c) after Theorem 1.3 in \cite{dallara_mongodi} for this. Using results of \cite{dallara_mongodi} or \cite{adachi_yum}, one may even try to estimate (or compute in some special cases) the Diederich--Fornaess index of a higher dimensional worm, as was done in \cite{liu} for the Diederich--Fornaess worm.

A first step in the direction of advancing our understanding of the global regularity theory of the $\overline\partial$-Neumann problem could be to investigate the exact regularity of the Bergman projection of the domains of the present paper. Results of Barrett \cite{barrett_worm, barrett_sectorial} suggest that domains as in Theorem \ref{thm:main} with $\gamma\neq 0$ \emph{may} exhibit an irregular Bergman projection at a sufficiently high level in the $L^2$-Sobolev scale, although our present understanding cannot exclude that one has exact regularity even when the D'Angelo class is non-trivial. We hope to come back to this point in a future publication. 

More generally, behavior of the Bergman and Cauchy--Szegö projections on $L^p$ and $L^p$-Sobolev spaces on higher dimensional worms is certainly worth investigating, in the spirit of \cite[Theorem 7.6]{krantz_peloso}, \cite{barrett_sahutoglu}, \cite[Theorem 4.6]{krantz_peloso_stoppato}, \cite{monguzzi_peloso} and \cite[Section 5]{lanzani}.


\begin{thebibliography}{99}
	\bibitem{adachi_yum} Adachi, Masanori, and Yum, Jihun. \emph{Diederich–Fornæss and Steinness indices for abstract CR manifolds.} The Journal of Geometric Analysis 31 (2021)
	\bibitem{arosio_dallara_fiacchi} Arosio, Leandro, Gian Maria Dall’Ara, and Matteo Fiacchi. \emph{Worm domains are not Gromov hyperbolic.} The Journal of Geometric Analysis 33.8 (2023)
	\bibitem{barrett_sahutoglu} Barrett, David E., and \c{S}ahuto\u{g}lu,, Sönmez. \emph{Irregularity of the Bergman projection on worm domains in $\C^n$.} Michigan Math. J 61.1 (2012)
	\bibitem{barrett_worm} Barrett, David E. \emph{Behavior of the Bergman projection on the Diederich-Formess worm.} Acta Math 168 (1992)
		\bibitem{barrett_sectorial} Barrett, David E. \emph{The Bergman projection on sectorial domains.} Contemporary Mathematics 212 (1998)
	\bibitem{bedford_fornaess} Bedford, Eric, and Fornæss, John Erik. \emph{Domains with pseudoconvex neighborhood systems.} Inventiones mathematicae 47.1 (1978)
	\bibitem{boas_straube} Boas, Harold P., and Emil J. Straube. \emph{De Rham cohomology of manifolds containing the points of infinite type, and Sobolev estimates for the problem.} The Journal of Geometric Analysis 3.3 (1993)
		\bibitem{catlin_necessity} Catlin, David. \emph{Necessary conditions for subellipticity of the $\overline\partial$-Neumann problem.} Annals of Mathematics 117.1 (1983)
	\bibitem{catlin_global} Catlin, David. \emph{Global regularity of the $\overline\partial$-Neumann problem.} Proc. Symp. Pure Math. Vol. 41. (1984)
\bibitem{chen_shaw} Chen, So-Chin, and Shaw, Mei-Chi. \emph{Partial Differential Equations in Several Complex Variables.} American Mathematical Society, Providence, RI and International Press (2001)
\bibitem{christ_worm} Christ, Michael. \emph{Global $C^\infty$ irregularity of the $\overline\partial$-Neumann problem for worm domains.} Journal of the American Mathematical Society (1996)
\bibitem{dallara_noncomp} Dall'Ara, Gian Maria. \emph{On noncompactness of the $\overline\partial$-Neumann problem on pseudoconvex domains in $\C^3$.} Journal of Mathematical Analysis and Applications 457.1 (2018)
\bibitem{dallara_mongodi} Dall’Ara, Gian Maria, and Samuele Mongodi. \emph{The core of the Levi distribution.} Journal de l’École polytechnique—Mathématiques 10 (2023)
\bibitem{dallara_mongodi_treuer} Dall'Ara, Gian Maria, Mongodi, Samuele, and Treuer, John N. \emph{The $q$-core of the Levi distribution and compactness of the $\overline\partial$-Neumann operator.} In preparation
\bibitem{diederich_fornaess_worm} Diederich, Klas, and  Fornæss, John Erik. \emph{Pseudoconvex domains: an example with nontrivial Nebenhülle.} Mathematische Annalen 225 (1977)
\bibitem{diederich_fornaess_index} Diederich, Klas, and  Fornæss, John Erik. \emph{Pseudoconvex domains: existence of Stein neighborhoods.} Inventiones Mathematicae 39 (1977)
\bibitem{fu_straube} Fu, Siqi, and  Straube, Emil J. \emph{Compactness in the $\overline{\partial}$-Neumann problem.} Complex analysis and geometry. Vol. 9. de Gruyter Berlin, 2001
\bibitem{gaussier_seshadri} Gaussier, Hervé, and Seshadri, Harish. \emph{On the Gromov hyperbolicity of convex domains in $\C^n$.} Comput. Methods
Funct. Theory 18 (2018)
\bibitem{gupta_straube_treuer} Gupta, Tanuj, Straube, Emil J., and Treuer, John N. \emph{Modifications of the Levi core.} arXiv preprint arXiv:2308.14807 (2023)
\bibitem{krantz_peloso} Krantz, Steven G., and Peloso, Marco M. \emph{Analysis and geometry on worm domains.} Journal of Geometric Analysis 18.2 (2008)
\bibitem{krantz_peloso_stoppato} Krantz, Steven G., Peloso, Marco M., and Stoppato, Caterina. \emph{On a higher-dimensional worm domain and its geometric properties.}, arXiv 2406.04905 (2024)
\bibitem{lanzani} Lanzani, Loredana, and Stein, Elias. \emph{On Regularity and Irregularity of Certain Holomorphic Singular Integral Operators.} in Ciatti, Paolo, and Alessio Martini, eds. \emph{Geometric Aspects of Harmonic Analysis.} Springer INdAM Series 45 (2021)
\bibitem{liu} Liu, Bingyuan. \emph{The Diederich–Fornaess index I: For domains of non-trivial index.} Advances in Mathematics 353 (2019)
\bibitem{liu_straube} Liu, Bingyuan, and  Straube, Emil J.. \emph{Diederich--Forn\ae ss index and global regularity in the $\overline {\partial} $--Neumann problem: domains with comparable Levi eigenvalues.} arXiv preprint arXiv:2207.14197
\bibitem{monguzzi_peloso} Monguzzi, Alessandro, and Peloso, Marco M.  \emph{Sharp estimates for the Szegö projection on the distinguished boundary of model worm domains.} Integral Equations and Operator Theory 89 (2017)
\bibitem{sahutoglu_straube} \c{S}ahuto\u{g}lu, Sönmez, and  Straube, Emil J. \emph{Analytic discs, plurisubharmonic hulls, and non-compactness of the $\overline\partial$-Neumann operator.} Mathematische Annalen 334 (2006)
\bibitem{sibony} Sibony, Nessim. \emph{Une classe de domaines pseudoconvexes.} Duke Mathematical Journal 55.2 (1987)
\bibitem{straube_book} Straube, Emil J. \emph{Lectures on the $\mathcal{L}^2$-Sobolev Theory of the $\overline\partial$-Neumann Problem.} Vol. 7. European Mathematical Society (2010)
\bibitem{treuer} Treuer, John N. \emph{Sufficient condition for compactness of the $\overline\partial$-Neumann operator using the Levi core.} Proceedings of the American Mathematical Society 152.02 (2024)
\bibitem{zimmer} Zimmer, Andrew M. \emph{Gromov hyperbolicity and the Kobayashi metric on convex domains of finite type.} Mathematische Annalen 365.3 (2016)
 \end{thebibliography}
\end{document}